\newtheorem{theorem}{Theorem}[section]
\newtheorem{lemma}[theorem]{Lemma}
\newtheorem{proposition}[theorem]{Proposition}
\newtheorem{corollary}[theorem]{Corollary}
\theoremstyle{definition}
\theoremstyle{remark}
\newtheorem{remark}[theorem]{Remark}
\numberwithin{equation}{section}
\begin{document}
\title[Norm inequalities for elementary operators]{Unitarily invariant norm inequalities for elementary operators involving $G_{1}$ operators}
\author[F. Kittaneh]{Fuad Kittaneh}
\address{Department of Mathematics, The University of Jordan, Amman, Jordan}
\email{fkitt@ju.edu.jo}
\author[M.S. Moslehian]{Mohammad Sal Moslehian}
\address{Department of Pure Mathematics, Center Of Excellence in Analysis on Algebraic Structures (CEAAS), Ferdowsi University of Mashhad, P. O. Box 1159, Mashhad 91775, Iran}
\email{moslehian@um.ac.ir}
\author[M. Sababheh]{Mohammad Sababheh}
\address{Department of Basic Sciences, Princess Sumaya University for Technology, Amman, Jordan}
\email{sababheh@psut.edu.jo, sababheh@yahoo.com}

\subjclass[2000]{15A60, 30E20, 47A30, 47B10, 47B15, 47B20.}
\keywords{$G_{1}$ operator; unitarily invariant norm; elementary operator; perturbation; analytic function.}

\begin{abstract}
In this paper, motivated by perturbation theory of operators, we present some upper bounds for $|||f(A)Xg(B)+ X|||$ in terms of $|||\,|AXB|+|X|\,|||$ and $|||f(A)Xg(B)- X|||$ in terms of $|||\,|AX|+|XB|\,|||$, where $A, B$ are $G_{1}$ operators, $|||\cdot|||$ is a unitarily invariant norm and $f, g$ are certain analytic functions. Further, we find some new upper bounds for the the Schatten $2$-norm of $f(A)X\pm Xg(B)$. Several special cases are discussed as well.
\end{abstract}

\maketitle

\section{Introduction}

Let ${\mathbb B}({\mathscr H})$ denote the $C^*$-algebra of all bounded linear operators on a separable complex Hilbert space ${\mathscr H}$ equipped with the operator norm $\|\cdot\|$. If $\dim \mathscr{H}=n$, we can identify $\mathbb{B}(\mathscr{H})$ with the matrix algebra $\mathbb{M}_n$ of all $n \times n$ matrices with entries in the complex field $\mathbb{C}$. If $z \in \mathbb{C}$, then we write $z$ instead of $zI$, where $I$ denotes the identity operator on $\mathscr{H}$. We write $A \geq 0$ when $A$ is positive (positive semi-definite for matrices). For any operator $A$ in the algebra $\mathbb{K}(\mathscr{H})$ of all compact operators, we denote by $\{s_j(A)\}$ the sequence of singular values of $A$, i.e. the eigenvalues $\lambda_j(|A|)$, where $|A|=(A^*A)^{1\over{2}}$, in decreasing order and repeated according to multiplicity. If the rank $A$ is $n$, we put $s_k(A)=0$ for any $k>n$.\\

In addition to the operator norm $\|\cdot\|$, which is defined on whole of ${\mathbb B}({\mathscr H})$, a unitarily invariant norm is a map $\left\vert \left\vert \left\vert \cdot \right\vert \right\vert \right\vert: \mathbb{K}(\mathscr{H}) \to [0,\infty]$ given by $\left\vert \left\vert \left\vert A \right\vert \right\vert \right\vert=g(s_1(A), s_2(A), \cdots)$, where $g$ is a symmetric norming function. The set $\mathcal{C}_{|||\cdot|||}=\{A \in \mathbb{K}(\mathscr{H}) : \left\vert \left\vert \left\vert A \right\vert \right\vert \right\vert < \infty \}$ is a closed self-adjoint ideal $\mathcal{J}$ of $\mathbb{B}(\mathscr{H})$ containing finite rank operators. It enjoys the properties:
\begin{itemize}
\item [(i)] For all $A,B\in\mathbb{B}(\mathscr{H})$ and $X \in \mathcal{J}$,
\begin{eqnarray}\label{u1}
\left\vert \left\vert \left\vert AXB\right\vert \right\vert \right\vert \leq
\left\vert \left\vert A\right\vert \right\vert \ \left\vert \left\vert
\left\vert X\right\vert \right\vert \right\vert \ \left\vert \left\vert
B\right\vert \right\vert\,.
\end{eqnarray}
\item[(ii)] If $X$ is a rank one operator, then
\begin{eqnarray}\label{u2}
\left\vert \left\vert \left\vert X\right\vert \right\vert \right\vert =\|X\|\,.
\end{eqnarray}
\end{itemize}
Inequality \eqref{u1} implies that $\left\vert \left\vert \left\vert UAV\right\vert \right\vert \right\vert =\left\vert \left\vert \left\vert A\right\vert \right\vert \right\vert $ for all unitary matrices $U,V\in \mathbb{B}(\mathscr{H})$ and all $A\in \mathcal{J}$. In addition, employing the polar decomposition of $X=W|X|$ with $W$ a partial isometry and \eqref{u1}, we have
\begin{eqnarray}\label{u3}
|||X|||=|||\ |X|\ |||\,.
\end{eqnarray}
The Ky Fan norms as an example of unitarily invariant norms are defined by $\| A\|
_{(k)}=\sum_{j=1}^{k}s_{j}(A)$ for $k=1,2,\ldots$. The Ky Fan
dominance theorem \cite[Theorme IV.2.2]{1} states that $\| A\|
_{(k)}\leq \| B\| _{(k)}\,\,(k=1,2,\ldots )$ if and only if
$|||A||| \leq |||B|||$ for all unitarily invariant norms
$|||\cdot|||$; see \cite{1, 6} for more information on unitarily invariant norms. For the sake of brevity, we will not explicitly mention this norm
ideal. Thus, when we consider $\left\vert \left\vert \left\vert A\right\vert \right\vert \right\vert $, we are assuming that $A$ belongs to the norm
ideal associated with $\left\vert \left\vert \left\vert \cdot \right\vert \right\vert \right\vert $. It is known that the Schatten $p$-norms $\|A\|_p=\left(\sum_{j=1}^\infty s_j^p(A)\right)^{1/p}$ are unitarily invariant for $1 \leq p < \infty$; cf. \cite[Section IV]{1}. We use the notation $A\oplus B$ for the diagonal block matrix ${\rm diag} (A,B)$.  Its singular values are $s_1(A), s_1(B), s_2(A), s_2(B), \cdots$. It is evident that
\begin{eqnarray}\label{plus}
\|A\oplus B\|=\max\{\|A\|,\|B\|\} \quad{\rm and}\quad \|A\oplus B\|_p=(\|A\|_p^p+\|B\|_p^p)^{1/p}\,.
\end{eqnarray}
The inequalities involving unitarily invariant norms have been of special interest; see e.g., \cite{mm}.

An operator $A\in\mathbb{B}(\mathscr{H})$ is called $G_{1}$ operator if the growth condition
\begin{eqnarray}\label{3}
\left\Vert (z-A)^{-1}\right\Vert =\frac{1}{{\rm{dist}}(z,\sigma (A))}
\end{eqnarray}
holds for all $z$ not in the spectrum $\sigma (A)$ of $A$. Here ${\rm{dist}}(z,\sigma (A))$ denotes the distance between $z$ and $\sigma
(A)$. It is known that hyponormal (in particular, normal) operators are $
G_{1}$ operators (see, e.g., \cite{9}).

Let $A\in\mathbb{B}(\mathscr{H})$ and let $f$ be a function which is analytic on an open neighborhood $
\Omega $ of $\sigma (A)$ in the complex plane. Then $f(A)$ denotes the
operator defined on $\mathbb{H}$ by the Riesz-Dunford integral as
\begin{eqnarray}
f(A)=\frac{1}{2\pi i}\int\limits_{C}f(z)(z-A)^{-1}dz,  \label{4}
\end{eqnarray}
where $C$ is a positively oriented simple closed rectifiable contour
surrounding $\sigma (A)$ in $\Omega $ (see, e.g., \cite[p. 568]{5}). The spectral mapping theorem asserts that $\sigma (f(A))=f(\sigma (A))$. Throughout this note,  $\mathbb{D}=\{z\in\mathbb{C}:\left\vert z\right\vert <1\}$ denotes the unit disk, $\partial\mathbb{D}$ stands for the boundary of $\mathbb{D}$ and $d_{A}={\rm{dist}}(\partial\mathbb{D},\sigma (A))$. In addition, we adopt the notation
$$\mathfrak{H}=\{f: \mathbb{D}\to \mathbb{C}: f \mbox{~is analytic}, \Re(f)>0 \mbox{~and} f(0)=1\}.$$

The Sylvester type equations $AXB\pm X=C$ have been investigated in matrix theory; see, e.g. \cite{BAO}. In addition, operators of the form $R(X)=\sum_{i=1}^nA_iXB_i$, in particular $\Delta_{A,B}=AXB- X$, are called elementary and have been studied in various aspects by several people; see, e.g. \cite{YAM}. Some mathematicians try to find some upper and lower bounds for norms of elementary operators; cf. \cite{SED}. Regarding $AXB+X$, it is shown in \cite{KIT2} that there is a constant $\gamma_p$ for any $1<p<\infty$ such that for any $A, B, X \in \mathbb{B}(\mathscr{H})$ such that $A, B$ are positive, it holds that $\|AXB+X\|_p\geq \gamma_p\|X\|_p$.

Several perturbation bounds for the norm of sum or difference of operators have been presented in the literature by employing some integral representations of certain functions; cf. \cite{BS, KIT}. For some other perturbation results the reader is referred to \cite{HK1, JK}. In this paper, we present some upper bounds for $|||f(A)Xg(B)\pm X|||$, where $A, B$ are $G_{1}$ operators, $|||\cdot|||$ is a unitarily invariant norm and $f, g\in \mathfrak{H}$. Further, we find some new upper bounds for the the Schatten $2$-norm of $f(A)X\pm Xg(B)$. Several applications are presented as well.


\section{Upper bounds for $|||f(A)Xg(B)\pm X|||$}

In this section, we find some upper bounds for $|||f(A)Xg(B)+ X|||$ in terms of $|||\,|AXB|+|X|\,|||$ and $|||f(A)Xg(B)- X|||$ in terms of $|||\,|AX|+|XB|\,|||$, where $A, B$ are $G_{1}$ operators, $|||\cdot|||$ is a unitarily invariant norm and $f, g\in \mathfrak{H}$, and present several consequences.\\
Our main result of this section reads as follows.

\begin{theorem}\label{T1}
If $A,B\in\mathbb{B}(\mathscr{H})$ are $G_{1}$ operators with $\sigma (A)\cup \sigma (B)\subset\mathbb{D}$ and $f, g \in \mathfrak{H}$, then for every $X\in\mathbb{B}(\mathscr{H})$ and for every unitarily invariant norm $\left\vert \left\vert \left\vert\cdot \right\vert \right\vert \right\vert $, the inequalities
\begin{eqnarray}
\left\vert \left\vert \left\vert f(A)Xg(B)+X\right\vert \right\vert
\right\vert \leq  \frac{2\sqrt{2}}{d_{A}d_{B}} \left\vert\left\vert \left\vert\,|AXB|+|X|\,\right\vert \right\vert \right\vert,  \label{5}
\end{eqnarray}
and
\begin{eqnarray}
\left\vert \left\vert \left\vert f(A)Xg(B)-X\right\vert \right\vert
\right\vert \leq  \frac{2\sqrt{2}}{d_{A}d_{B}} \left\vert\left\vert \left\vert\,|AX|+|XB|\,\right\vert \right\vert \right\vert,  \label{55}
\end{eqnarray}
hold.
\end{theorem}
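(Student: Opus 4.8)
The plan is to use an integral (Herglotz) representation of $f$ and $g$ to turn $f(A)Xg(B)\pm X$ into a double integral of resolvent sandwiches, to control the resolvents via the $G_{1}$ growth condition \eqref{3}, and finally to estimate a simple two–term expression by a unitarily invariant norm inequality. First I would recall that every $f\in\mathfrak H$ admits the Herglotz representation $f(z)=\int_{\partial\mathbb D}\frac{\lambda+z}{\lambda-z}\,d\mu(\lambda)$ with $\mu$ a \emph{probability} measure on $\partial\mathbb D$ (the total mass $1$ and the absence of an additive imaginary constant come from $\Re f>0$ and $f(0)=1$), and likewise $g(z)=\int_{\partial\mathbb D}\frac{\eta+z}{\eta-z}\,d\nu(\eta)$. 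Since $\sigma(A)\cup\sigma(B)\subset\mathbb D$, for each $\lambda,\eta\in\partial\mathbb D$ the maps $z\mapsto\frac{\lambda+z}{\lambda-z}$ and $z\mapsto\frac{\eta+z}{\eta-z}$ are analytic on neighborhoods of $\sigma(A)$ and $\sigma(B)$, so the Riesz--Dunford calculus \eqref{4} applies, and interchanging the scalar integral with the contour integral yields $f(A)=\int_{\partial\mathbb D}(\lambda+A)(\lambda-A)^{-1}\,d\mu(\lambda)$ and $g(B)=\int_{\partial\mathbb D}(\eta+B)(\eta-B)^{-1}\,d\nu(\eta)$.

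The crucial algebraic step is to clear denominators. Writing $X=(\lambda-A)^{-1}(\lambda-A)X(\eta-B)(\eta-B)^{-1}$ and expanding the products, one gets
\[
\frac{\lambda+A}{\lambda-A}\,X\,\frac{\eta+B}{\eta-B}+X=2(\lambda-A)^{-1}\bigl(\lambda\eta X+AXB\bigr)(\eta-B)^{-1},
\]
and with the opposite sign
\[
\frac{\lambda+A}{\lambda-A}\,X\,\frac{\eta+B}{\eta-B}-X=2(\lambda-A)^{-1}\bigl(\lambda XB+\eta AX\bigr)(\eta-B)^{-1}.
\]
Because $\mu,\nu$ have total mass $1$ we have $\int_{\partial\mathbb D}\int_{\partial\mathbb D}X\,d\mu\,d\nu=X$, so integrating these identities represents $f(A)Xg(B)\pm X$ as a double integral of resolvent sandwiches.

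Now the $G_{1}$ hypothesis enters: for $\lambda\in\partial\mathbb D$ we have $\lambda\notin\sigma(A)$, and \eqref{3} gives $\|(\lambda-A)^{-1}\|=\mathrm{dist}(\lambda,\sigma(A))^{-1}\le d_A^{-1}$ (since $\mathrm{dist}(\lambda,\sigma(A))\ge d_A$), and likewise $\|(\eta-B)^{-1}\|\le d_B^{-1}$. Applying the triangle inequality for the Bochner integral together with the submultiplicativity \eqref{u1}, I obtain
\[
|||f(A)Xg(B)+X|||\le\frac{2}{d_Ad_B}\int_{\partial\mathbb D}\int_{\partial\mathbb D}|||\lambda\eta X+AXB|||\,d\mu\,d\nu,
\]
and the analogous estimate with $|||\lambda XB+\eta AX|||$ in the minus case.

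It then remains to bound the inner norms, and here I would use the norm inequality $|||U+V|||\le\sqrt2\,|||\,|U|+|V|\,|||$, valid for all operators and all unitarily invariant norms. Since $|\lambda\eta|=|\lambda|=|\eta|=1$ we have $|\lambda\eta X|=|X|$, $|\lambda XB|=|XB|$ and $|\eta AX|=|AX|$, so this gives $|||\lambda\eta X+AXB|||\le\sqrt2\,|||\,|AXB|+|X|\,|||$ and $|||\lambda XB+\eta AX|||\le\sqrt2\,|||\,|AX|+|XB|\,|||$; these bounds are independent of $\lambda,\eta$, the double integrals collapse (probability measures), and the constant $\frac{2}{d_Ad_B}\cdot\sqrt2=\frac{2\sqrt2}{d_Ad_B}$ emerges, yielding \eqref{5} and \eqref{55}. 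The main obstacle is precisely this last norm inequality: writing $U+V=\begin{pmatrix}I&I\end{pmatrix}\begin{pmatrix}U\\V\end{pmatrix}$ (whose row factor has norm $\sqrt2$) reduces it to $|||\,(|U|^2+|V|^2)^{1/2}\,|||\le|||\,|U|+|V|\,|||$, which, although elementary for the operator norm (via $|U|^2+|V|^2\le\|\,|U|+|V|\,\|\,(|U|+|V|)$) and for the trace norm, is a genuine weak–majorization statement in general that I would either cite or establish separately; the other point requiring care is the rigorous interchange of the Herglotz integral with the Riesz--Dunford functional calculus.
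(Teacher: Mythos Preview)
Your proposal is correct and follows essentially the same route as the paper: Herglotz representation of $f$ and $g$, the same algebraic resolvent identities yielding $2(\lambda-A)^{-1}(\lambda\eta X+AXB)(\eta-B)^{-1}$ and $2(\lambda-A)^{-1}(\lambda XB+\eta AX)(\eta-B)^{-1}$, the $G_1$ bound on the resolvents, and finally the inequality $|||U+V|||\le\sqrt2\,|||\,|U|+|V|\,|||$ via the block-matrix factorization together with $|||(|U|^2+|V|^2)^{1/2}|||\le|||\,|U|+|V|\,|||$. The paper justifies this last step by citing the Ando--Zhan subadditivity inequality $|||h(C+D)|||\le|||h(C)+h(D)|||$ for the operator-monotone function $h(t)=t^{1/2}$, which is exactly the reference you anticipated needing.
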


\begin{proof}
We prove inequality \eqref{5}, the other inequality can be proved in a similar fashion.\\
It follows from the Herglotz representation theorem (see, e.g., \cite[p. 21]{4}) that $f\in \mathfrak{H}$ can be
represented as
\begin{eqnarray}
f(z)=\int\limits_{0}^{2\pi }\frac{e^{i\alpha}+z}{e^{i\alpha}-z}d\mu(\alpha)+i\Im f(0)=\int\limits_{0}^{2\pi }\frac{e^{i\alpha}+z}{e^{i\alpha}-z}d\mu(\alpha)  \label{6}
\end{eqnarray}
where $\mu $ is a positive Borel measure on the interval $[0,2\pi ]$ with
finite total mass $\int\limits_{0}^{2\pi }d\mu(\alpha)=f(0)=1$. Similarly $g(z)=\int\limits_{0}^{2\pi }\frac{e^{i\alpha}+z}{e^{i\alpha}-z}d\nu(\alpha)$ for some positive Borel measure $\nu$ on the interval $[0,2\pi ]$ with finite total mass $1$. We have
\begin{eqnarray*}
&&\hspace{-0.5in}f(A)Xg(B)+X\\
&=&\int\limits_{0}^{2\pi }\int\limits_{0}^{2\pi }\left[ \left( e^{i\alpha}-A\right)^{-1}
\left( e^{i\alpha}+A\right)X\left( e^{i\beta}+B\right) \left(
e^{i\beta}-B\right)^{-1}+X\right] d\mu(\alpha)d\nu(\beta).
\end{eqnarray*}
A simple computation shows that
\begin{align*}
&\hspace{-0.5cm}\left( e^{i\alpha}-A\right)^{-1}
\left( e^{i\alpha}+A\right)X\left( e^{i\beta}+B\right) \left(
e^{i\beta}-B\right)^{-1}+X\\
&=\left( e^{i\alpha}-A\right)^{-1} \left( e^{i\alpha}+A\right)X\left( e^{i\beta}+B\right)\left( e^{i\beta}-B\right)^{-1}\\
&\quad + \left(e^{i\alpha}-A\right)^{-1} \left( e^{i\alpha}-A\right)X\left(
e^{i\beta}-B\right) \left( e^{i\beta}-B\right)^{-1}\\
&=\left( e^{i\alpha}-A\right)^{-1}[\left( e^{i\alpha}+A\right)X\left( e^{i\beta}+B\right)+\left( e^{i\alpha}-A\right)X\left( e^{i\beta}-B\right)] \left( e^{i\beta}-B\right)^{-1}\\
&= 2\left( e^{i\alpha}-A\right)^{-1}(AXB+e^{i\alpha}Xe^{i\beta}) \left( e^{i\beta}-B\right)^{-1}.
\end{align*}
Thus, by inequality \eqref{u1}, we have
\begin{eqnarray} \label{s1}
&&\hspace{-1.5cm}\left\vert \left\vert \left\vert f(A)Xg(B)+X\right\vert \right\vert
\right\vert \nonumber\\
&\leq& \int\limits_{0}^{2\pi }\int\limits_{0}^{2\pi }2\left\Vert \left( e^{i\alpha
}-A\right)^{-1}\right\Vert \left\vert \left\vert \left\vert AXB+
e^{i\alpha}Xe^{i\beta}\right\vert \right\vert \right\vert \left\Vert \left( e^{i\alpha
}-B\right)^{-1}\right\Vert d\mu(\alpha)d\nu(\beta).\nonumber \\
\end{eqnarray}
Due to $A$ and $B$ are $G_{1}$ operators, we deduce from \eqref{3} that
\begin{eqnarray}\label{s2}
\left\vert \left\vert \left( e^{i\alpha}-A\right)^{-1}\right\vert
\right\vert =\frac{1}{{\rm{dist}}(e^{i\alpha},\sigma (A))}\leq \frac{1}{{\rm{dist}}(\partial\mathbb{D},\sigma (A))}=\frac{1}{d_{A}},
\end{eqnarray}
and similarly
\begin{eqnarray}\label{s3}
\left\vert \left\vert \left( e^{i\beta}-B\right)^{-1}\right\vert \right\vert \leq \frac{1}{d_{B}}.
\end{eqnarray}
In addition, we have
\begin{align}
&\hspace{-0.5cm}\left\vert\left\vert \left\vert  \left(AXB+
e^{i\alpha}Xe^{i\beta}\right) \oplus 0\right\vert \right\vert \right\vert \nonumber \\
&=\left\vert\left\vert \left\vert \left(e^{-i\beta}AXB+e^{i\alpha}X\right) \oplus 0 \right\vert \right\vert \right\vert \nonumber \\
&=\left\vert\left\vert \left\vert  \begin{bmatrix} e^{-i\beta}& e^{i\alpha}\\0&0\end{bmatrix}\begin{bmatrix} AXB& 0\\X&0\end{bmatrix} \right\vert \right\vert \right\vert\nonumber \\
&\leq \left\vert \left\vert \begin{bmatrix} e^{-i\beta}&  e^{i\alpha}\\ 0&0\end{bmatrix}  \right\vert \right\vert \,\left\vert\left\vert \left\vert\begin{bmatrix} AXB& 0\\X&0\end{bmatrix} \right\vert \right\vert \right\vert\qquad \qquad \qquad  \quad (\mbox{by inequality~} \eqref{u1})\nonumber \\
&= \sqrt{2} \left\vert\left\vert \left\vert \ \left|\begin{bmatrix} AXB& 0\\X&0\end{bmatrix}\right| \ \right\vert \right\vert \right\vert\nonumber  \\
&= \sqrt{2} \left\vert\left\vert \left\vert \,(|AXB|^2+|X|^2)^{1/2}\oplus 0 \, \right\vert \right\vert \right\vert\nonumber  \\
\label{msm0}&\leq \sqrt{2} \left\vert\left\vert \left\vert \,(|AXB|+|X|)\oplus 0 \, \right\vert \right\vert \right\vert.
\end{align}
To get the last inequality we applied a result of Ando and Zhan \cite[p. 775]{AZ} to the function $h(t)=t^{1/2}$. It states that for every positive operators $C, D$, every non-negative operator monotone function $h(t)$ on $[0,\infty)$ and every unitarily invariant norm $|||\cdot|||$  it holds that $|||h(A+B)||| \leq |||h(A)+h(B)|||$.\\
Now from the Ky Fan dominance theorem and \eqref{msm0} we infer that
\begin{eqnarray}\label{s4}
 \left\vert\left\vert \left\vert  AXB+ e^{i\alpha}Xe^{i\beta} \right\vert \right\vert \right\vert \leq \sqrt{2} \left\vert\left\vert \left\vert\, |AXB|+|X| \, \right\vert \right\vert \right\vert.
\end{eqnarray}
It follows from  inequalities \eqref{s1}, \eqref{s2}, \eqref{s3} and \eqref{s4} that
\begin{eqnarray*}
\left\vert \left\vert \left\vert f(A)Xg(B)+X\right\vert \right\vert
\right\vert &\leq&  \frac{2\sqrt{2}}{d_{A}d_{B}}\left\vert\left\vert \left\vert\, |AXB|+|X| \, \right\vert \right\vert \right\vert \int\limits_{0}^{2\pi }\int\limits_{0}^{2\pi }d\mu(\alpha)d\nu(\beta)\\
&=&\frac{2\sqrt{2}\mu \left( \partial\mathbb{D}\right)\nu \left( \partial\mathbb{D}\right) }{d_{A}d_{B}} \left\vert\left\vert \left\vert\, |AXB|+|X| \, \right\vert \right\vert \right\vert \nonumber\\
&\leq&  \frac{2\sqrt{2}}{d_{A}d_{B}} \left\vert\left\vert \left\vert\, |AXB|+|X| \, \right\vert \right\vert \right\vert
\end{eqnarray*}
as required.
\end{proof}


\begin{remark}\label{r1}
Under the assumptions of Theorem \ref{T1}, we have
\begin{eqnarray*}
\left\vert \left\vert \left\vert \left(f(A)Xg(B)+X\right)\oplus 0 \right\vert \right\vert
\right\vert \leq  \frac{4\sqrt{2}}{d_{A}d_{B}} \left\vert\left\vert \left\vert AXB\oplus X \right\vert \right\vert \right\vert\,.
\end{eqnarray*}
To see this, first note that, the Ky Fan dominance theorem and \eqref{5} yield that
\begin{eqnarray}\label{msm1}
\left\vert \left\vert \left\vert (f(A)Xg(B)+X) \oplus 0\right\vert \right\vert
\right\vert \leq  \frac{2\sqrt{2}}{d_{A}d_{B}} \left\vert\left\vert \left\vert\,(|AXB|+|X|) \oplus 0\,\right\vert \right\vert \right\vert\,.
\end{eqnarray}
On the other hand, by inequality (3) in \cite{HK2},  $s_j((C+D)/2) \leq s_j(C\oplus D)$ for operators $C$ and $D$. Hence $\left\vert\left\vert \left\vert (C+D) \oplus 0 \right\vert \right\vert \right\vert \leq 2 \left\vert\left\vert \left\vert C\oplus D \right\vert \right\vert \right\vert$. Utilizing \eqref{u3}, we therefore get
\begin{eqnarray*}
\left\vert\left\vert \left\vert\,(|AXB|+|X|) \oplus 0\,\right\vert \right\vert \right\vert \leq 2   \left\vert\left\vert \left\vert\,|AXB|\oplus |X|\,\right\vert \right\vert \right\vert=2\left\vert\left\vert \left\vert AXB\oplus X \right\vert \right\vert \right\vert\,
\end{eqnarray*}
from which and inequality \eqref{msm1}, we reach the required inequality.
\end{remark}

\begin{corollary}\label{c1}
Let $f, g\in \mathfrak{H}$ and $A\in\mathbb{B}(\mathscr{H})$ be a $G_{1}$ operator with $\sigma (A)\subset\mathbb{D}$. Then for every normal operator  $X\in\mathbb{B}(\mathscr{H})$ commuting with $A$ and for every unitarily invariant norm $\left\vert \left\vert \left\vert\cdot \right\vert \right\vert \right\vert $, the inequalities
\begin{eqnarray*}
\left\vert \left\vert \left\vert f(A)Xg(A^*)+X\right\vert \right\vert
\right\vert \leq  \frac{2}{d_{A}^2} \left\vert\left\vert \left\vert\ A|X|A^*+|X|\ \right\vert \right\vert \right\vert,
\end{eqnarray*}
and
\begin{eqnarray*}
\left\vert \left\vert \left\vert f(A)Xg(A^*)-X\right\vert \right\vert
\right\vert \leq  \frac{2}{d_{A}^2} \left\vert\left\vert \left\vert\ |AX|+|XA^*|\ \right\vert \right\vert \right\vert\,.
\end{eqnarray*}
are valid.
\end{corollary}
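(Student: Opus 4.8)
The plan is to specialize Theorem~\ref{T1} to the case $B = A^*$ and then sharpen the single estimate in its proof that produced the factor $\sqrt 2$, the sharpening being powered by the hypotheses that $X$ is normal and commutes with $A$.

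First I would record the structural facts that make the substitution $B=A^*$ both legitimate and useful. Since $(z-A^*)^{-1}=((\bar z-A)^{-1})^*$ and $\sigma(A^*)=\overline{\sigma(A)}$, the operator $A^*$ is again a $G_1$ operator with $\sigma(A^*)\subset\mathbb{D}$, and $d_{A^*}=d_A$ because $\partial\mathbb{D}$ and $\sigma(A)$ are each invariant (as sets) under conjugation up to the distance computation. Next, from $AX=XA$ and the normality of $X$, Fuglede's theorem gives $A^*X=XA^*$; taking adjoints shows $X^*$ also commutes with $A$ and $A^*$. Hence $|X|$ and the partial isometry $U$ of the polar decomposition $X=U|X|$, both of which lie in the von Neumann algebra generated by $X$, commute with $A$ and $A^*$. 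From this I obtain the identities $|AX|=|A|\,|X|$ and $|XA^*|=|A^*|\,|X|$, and, crucially, the ability to factor the integrand of the Riesz--Dunford representation used in Theorem~\ref{T1}.

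Then I would rerun that proof verbatim with $B=A^*$. For the first inequality the inner factor becomes $AXA^*+e^{i(\alpha+\beta)}X=U\bigl(A|X|A^*+e^{i(\alpha+\beta)}|X|\bigr)$; writing $P=A|X|A^*\ge0$ and $Q=|X|\ge0$, which commute, the elementary identity $(P+Q)^2-|P+e^{i\theta}Q|^2=2(1-\cos\theta)PQ\ge0$ yields $|P+e^{i\theta}Q|\le P+Q$, and therefore $|||AXA^*+e^{i\theta}X|||\le|||\,A|X|A^*+|X|\,|||$ with no $\sqrt2$. Integrating against $d\mu\,d\nu$ and using $\|(e^{i\alpha}-A)^{-1}\|\le d_A^{-1}$ and $\|(e^{i\beta}-A^*)^{-1}\|\le d_A^{-1}$ then produces exactly the constant $2/d_A^2$.

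For the second inequality the inner factor equals $e^{i\alpha}XA^*+e^{i\beta}AX=(e^{i\beta}A+e^{i\alpha}A^*)X=U(e^{i\beta}A+e^{i\alpha}A^*)|X|$, so after discarding the unit scalar the task reduces to $|||S|X|\,|||\le|||\,(|A|+|A^*|)\,|X|\,|||$, where $S:=e^{-i\theta_0}(e^{i\beta}A+e^{i\alpha}A^*)$ is self-adjoint for the appropriate phase $\theta_0$ and has the form $S=C+C^*$ with $|C|=|A|$ and $|C^*|=|A^*|$. The positivity of the $2\times2$ operator matrix with diagonal entries $|C^*|,|C|$ and off-diagonal entries $C,C^*$ gives $-Z\le S\le Z$ with $Z:=|A|+|A^*|$. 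Since $Z\pm S\ge0$ and both commute with $|X|\ge0$, the operators $P:=(Z+S)|X|$ and $Q:=(Z-S)|X|$ are positive, and the desired bound becomes $|||P-Q|||\le|||P+Q|||$. This last step is the crux, and I expect it to be the main obstacle: it is precisely the classical unitarily invariant norm inequality $|||P-Q|||\le|||P+Q|||$ valid for all positive operators $P,Q$. Granting it, the estimate $|||(e^{i\beta}A+e^{i\alpha}A^*)X|||\le|||\,|AX|+|XA^*|\,|||$ follows, and the integration step again supplies the factor $2/d_A^2$. By contrast the first inequality requires only the elementary commuting-positive estimate above, so the second inequality is the genuinely harder half.
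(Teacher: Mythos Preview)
Your proposal is correct and follows a genuinely different path from the paper's argument. The paper reduces the constant from $\sqrt{2}$ to $1$ by invoking Bourin's subadditivity inequality $|||C+D|||\le|||\,|C|+|D|\,|||$ for \emph{normal} $C,D$, applied (for the plus inequality) to $C=AXA^*$ and $D=e^{i\theta}X$, after checking via Fuglede--Putnam that $AXA^*$ is normal with $|AXA^*|=A|X|A^*$. You instead push the commutativity one step further, pulling out the polar partial isometry $U\in W^*(X)$ so that the inner factor becomes $U(P+e^{i\theta}Q)$ with $P=A|X|A^*$, $Q=|X|$ commuting and positive, and then use the elementary operator inequality $|P+e^{i\theta}Q|\le P+Q$. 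This bypasses the (nontrivial) Bourin result entirely.

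For the minus inequality your route actually buys more. The paper's sketch treats both cases uniformly via Bourin, but the summands $e^{i\beta}AX$ and $e^{i\alpha}XA^*$ that arise in the minus case are normal only when $AA^*=A^*A$, which the $G_1$ hypothesis does not guarantee; so the quoted tool does not apply as stated. Your rewriting $e^{i\beta}A+e^{i\alpha}A^*=e^{i\theta_0}(C+C^*)$ with $|C|=|A|$, $|C^*|=|A^*|$, the block-matrix bound $-(|C|+|C^*|)\le C+C^*\le |C|+|C^*|$, and the classical inequality $|||P-Q|||\le|||P+Q|||$ for positive $P,Q$ together give a clean argument that needs no normality of $A$. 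In short, the paper's proof is shorter by outsourcing to Bourin, while yours is more self-contained and, for the second inequality, more robust.
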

\begin{proof}
First, note that under the assumptions of normality of $X$ and normality of $AXB$ and using this fact that $||| C+D |||\leq |||\,|C|+|D|\,|||$ for any normal operators $C$ and $D$ (see \cite{BOU}), the constant $\sqrt{2}$ can be reduced to $1$ in \eqref{s4}.\\
Second, recall that the Fuglede--Putnam theorem states that if $A\in {\mathbb B}({\mathscr H})$ is an operator, $X\in {\mathbb B}({\mathscr H})$ is normal and $AX=XA$, then $AX^*=X^*A$; see \cite{MN} and references therein. Thus if $X$ is a normal operator commuting with a $G_{1}$ operator $A$, then $AXA^*$ is normal, $|AXA^*|=A|X|A^*$ and $A^*$ is a $G_1$ operator with  $d_{A^*}=d_A$. Hence we get the required inequalities by employing Theorem \ref{T1}.
\end{proof}
Next, letting $A=B$ in \eqref{55} of Theorem \ref{T1}, we get the following inequality.
\begin{corollary}\label{C2}
Let $f, g\in \mathfrak{H}$ and $A\in\mathbb{B}(\mathscr{H})$ be a $G_{1}$ operator with $\sigma (A)\subset\mathbb{D}$. Then \begin{eqnarray*}
\left\vert \left\vert \left\vert f(A)Xg(A)-X\right\vert \right\vert
\right\vert \leq  \frac{2\sqrt{2}}{d_{A}^2} \left\vert\left\vert \left\vert \,|AX|+|XA|\,\right\vert \right\vert \right\vert
\end{eqnarray*}
for every $X\in\mathbb{B}(\mathscr{H})$ and for every unitarily invariant norm $\left\vert \left\vert \left\vert
\cdot \right\vert \right\vert \right\vert $.
\end{corollary}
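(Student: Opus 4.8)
The plan is to obtain this corollary as an immediate specialization of inequality \eqref{55} in Theorem \ref{T1}, so essentially no new work is required beyond checking that the hypotheses survive the identification $B=A$. Recall that \eqref{55} asserts, for any two $G_{1}$ operators $A,B$ with $\sigma(A)\cup\sigma(B)\subset\mathbb{D}$ and any $f,g\in\mathfrak{H}$, the bound $|||f(A)Xg(B)-X|||\leq \frac{2\sqrt{2}}{d_{A}d_{B}}\,|||\,|AX|+|XB|\,|||$. The whole content of the present statement is carried by that theorem; the strategy is merely to put $B=A$ and simplify.

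First I would verify the hypotheses. With $B=A$ we have $\sigma(A)\cup\sigma(B)=\sigma(A)\subset\mathbb{D}$ by assumption, and since $A$ is a $G_{1}$ operator, both $A$ and $B=A$ meet the requirements of Theorem \ref{T1}; moreover $f,g\in\mathfrak{H}$ as given. Hence \eqref{55} applies verbatim to this choice of operators. Next I would substitute: since $B=A$, the two distance quantities coincide, $d_{B}=d_{A}$, so the constant collapses to $\frac{2\sqrt{2}}{d_{A}d_{B}}=\frac{2\sqrt{2}}{d_{A}^{2}}$, and likewise $|XB|=|XA|$, so the right-hand side of \eqref{55} becomes $\frac{2\sqrt{2}}{d_{A}^{2}}\,|||\,|AX|+|XA|\,|||$. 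This is exactly the asserted inequality, valid for every $X\in\mathbb{B}(\mathscr{H})$ and every unitarily invariant norm.

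There is essentially no obstacle to overcome. The only point deserving a moment's attention is the confirmation that the spectral condition $\sigma(A)\cup\sigma(B)\subset\mathbb{D}$ and the $G_{1}$ property are not lost when $B$ is identified with $A$, and both are immediate. I would therefore present the proof as a single short deduction, invoking Theorem \ref{T1} and recording the two simplifications $d_{B}=d_{A}$ and $|XB|=|XA|$.
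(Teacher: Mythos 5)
Your proposal is correct and matches the paper exactly: the paper obtains Corollary \ref{C2} precisely by setting $A=B$ in inequality \eqref{55} of Theorem \ref{T1}. The only additions you make are the routine verifications that the hypotheses persist and that $d_B=d_A$, $|XB|=|XA|$, which the paper leaves implicit.
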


Setting $X=I$ in Theorem \ref{T1}, we obtain the following result.

\begin{corollary}\label{C3}
Let $f, g\in \mathfrak{H}$ and $A,B\in\mathbb{M}_n$ be $G_{1}$ matrices such that $\sigma (A)\cup \sigma (B)\subset\mathbb{D}$. Then for every unitarily invariant norm $\left\vert \left\vert \left\vert
\cdot \right\vert \right\vert \right\vert $,
\begin{eqnarray*}
\left\vert \left\vert \left\vert f(A)g(B)+I\right\vert \right\vert
\right\vert \leq  \frac{2\sqrt{2}}{d_{A}d_{B}} \left\vert\left\vert \left\vert\,|AB|+I\,\right\vert \right\vert \right\vert
\end{eqnarray*}
and
\begin{eqnarray*}
\left\vert \left\vert \left\vert f(A)g(B)-I\right\vert \right\vert
\right\vert \leq  \frac{2\sqrt{2}}{d_{A}d_{B}} \left\vert\left\vert \left\vert\,|A|+|B|\,\right\vert \right\vert \right\vert .
\end{eqnarray*}
\end{corollary}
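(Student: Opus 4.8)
The plan is to obtain Corollary \ref{C3} as a direct specialization of Theorem \ref{T1}, taking $X=I$. The one point worth flagging is why the statement is restricted to matrices rather than to general operators on $\mathscr{H}$: the identity operator $I$ fails to be compact in infinite dimensions, so it need not lie in the norm ideal $\mathcal{J}$ associated with $|||\cdot|||$, and then the quantities $|||\,|AB|+I\,|||$ and $|||\,|A|+|B|\,|||$ could be infinite. In the finite-dimensional setting $\mathbb{M}_n$, every operator belongs to every such ideal, so $X=I$ is an admissible choice and all the norms below are finite. This is exactly what forces the hypothesis $A,B\in\mathbb{M}_n$.

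With that understood, I would invoke Theorem \ref{T1} for the given $G_{1}$ matrices $A,B$ with $\sigma(A)\cup\sigma(B)\subset\mathbb{D}$ and the functions $f,g\in\mathfrak{H}$, and substitute $X=I$. Inequality \eqref{5} then reads
$$|||f(A)\,I\,g(B)+I|||\leq \frac{2\sqrt{2}}{d_{A}d_{B}}\,|||\,|A\,I\,B|+|I|\,|||,$$
and inequality \eqref{55} reads
$$|||f(A)\,I\,g(B)-I|||\leq \frac{2\sqrt{2}}{d_{A}d_{B}}\,|||\,|A\,I|+|I\,B|\,|||.$$

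It then remains only to simplify. On the left-hand sides, $f(A)\,I\,g(B)=f(A)g(B)$. On the right-hand sides I would use $A\,I\,B=AB$, hence $|A\,I\,B|=|AB|$, together with $|I|=I$, $|A\,I|=|A|$ and $|I\,B|=|B|$. These identities convert the two displays precisely into the asserted bounds
$$|||f(A)g(B)+I|||\leq \frac{2\sqrt{2}}{d_{A}d_{B}}\,|||\,|AB|+I\,|||,\qquad |||f(A)g(B)-I|||\leq \frac{2\sqrt{2}}{d_{A}d_{B}}\,|||\,|A|+|B|\,|||.$$
Since the whole derivation is a substitution into an already-established inequality, there is no genuine obstacle; the only subtlety is the admissibility of $X=I$ addressed in the first paragraph.
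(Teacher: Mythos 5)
Your proposal is correct and coincides with the paper's argument: the paper likewise obtains Corollary \ref{C3} simply by setting $X=I$ in Theorem \ref{T1}. Your additional remark on why the statement is confined to $\mathbb{M}_n$ (so that $X=I$ lies in the norm ideal) is a sensible clarification but does not change the route.
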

To achieve our next result, we need the following lemma. Its proof is standard but we provide a proof for the sake of completeness.
\begin{lemma}\label{lem1}
If $A\in \mathbb{B}(\mathscr{H})$ is self-adjoint and $f$ is a continuous complex function on $\sigma(A)$, then $f(UAU^*)=Uf(A)U^*$ for all unitaries $U$.
\end{lemma}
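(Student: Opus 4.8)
The plan is to establish the identity first for polynomials and then to extend it to an arbitrary continuous function by a density argument, exploiting the fact that the continuous functional calculus for a bounded self-adjoint operator is an isometric homomorphism on $C(\sigma(A))$.

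First I would record the preliminaries needed to make the statement meaningful. Since $U$ is unitary, $UAU^*$ is again self-adjoint, and conjugation by a unitary preserves the spectrum, so $\sigma(UAU^*)=\sigma(A)$; hence $f$ is simultaneously defined on the spectra of $A$ and of $UAU^*$, and both $f(A)$ and $f(UAU^*)$ make sense. Next, for a monomial $p(t)=t^n$ one computes directly that $(UAU^*)^n=UA^nU^*$, telescoping the product by means of $U^*U=I$; by linearity this yields $p(UAU^*)=Up(A)U^*$ for every polynomial $p$ with complex coefficients.

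For the general case I would invoke approximation. Because $A$ is bounded and self-adjoint, $\sigma(A)$ is a compact subset of $\mathbb{R}$, and by the Stone--Weierstrass theorem the complex polynomials are uniformly dense in $C(\sigma(A))$: they separate points, contain the constants, and are stable under complex conjugation on the real line (where $\overline{p(t)}=\bar p(t)$ is again a polynomial). Choose polynomials $p_k$ with $p_k\to f$ uniformly on $\sigma(A)=\sigma(UAU^*)$. Since the continuous functional calculus is isometric, $\|p_k(A)-f(A)\|=\sup_{t\in\sigma(A)}|p_k(t)-f(t)|\to 0$, and likewise $p_k(UAU^*)\to f(UAU^*)$ in the operator norm.

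Finally I would pass to the limit in the polynomial identity. Since the map $T\mapsto UTU^*$ is an isometry on $\mathbb{B}(\mathscr{H})$, combining $p_k(UAU^*)=Up_k(A)U^*$ with the two norm convergences above gives $f(UAU^*)=Uf(A)U^*$, as claimed. The only point requiring a little care --- and the main, though modest, obstacle --- is justifying that polynomials suffice to approximate a \emph{complex-valued} continuous function; this is precisely where the reality of $\sigma(A)$ and the Stone--Weierstrass theorem enter, guaranteeing that the required closure under conjugation holds automatically.
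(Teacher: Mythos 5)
Your proof is correct and follows the same route as the paper's: approximate $f$ uniformly on $\sigma(A)$ by polynomials via Stone--Weierstrass, use $p(UAU^*)=Up(A)U^*$ for polynomials, and pass to the limit using $\sigma(UAU^*)=\sigma(A)$. Your additional remarks on the isometry of the functional calculus and the self-conjugacy of polynomials on the real spectrum merely flesh out details the paper leaves implicit.
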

\begin{proof}
By the Stone-Weierstrass theorem, there is a sequence $(p_n)$ of polynomials uniformly converging to $f$ on $\sigma(A)$. Hence
$$f(UAU^*)=\lim_np_n(UAU^*)=U(\lim_np_n(A))U^*=Uf(A)U^*\,.$$
Note that $\sigma(UAU^*)=\sigma(A)$.
\end{proof}

\begin{proposition}\label{C4}
Let $f, g\in \mathfrak{H}$ and $A\in\mathbb{B}(\mathscr{H})$ be a positive operator with $\sigma (A)\subset [0,1)$. Then for every unitarily invariant norm $\left\vert \left\vert \left\vert
\cdot \right\vert \right\vert \right\vert $ and every unitary operator $U\in \mathbb{B}(\mathscr{H})$, it holds that
\begin{eqnarray*}
\left\vert \left\vert \left\vert f(A)Ug(A)-U\right\vert \right\vert
\right\vert \leq  \frac{2\sqrt{2}}{d_{A}^2} \left\vert\left\vert \left\vert\,AU+UA\,\right\vert \right\vert \right\vert .
\end{eqnarray*}
\end{proposition}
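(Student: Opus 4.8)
The plan is to recognize this assertion as the special case $X=U$ of Corollary \ref{C2}. Since $A$ is positive it is normal, hence a $G_1$ operator, and $\sigma(A)\subset[0,1)\subset\mathbb{D}$, so the hypotheses of Corollary \ref{C2} are met. Applying it with $X=U$ gives
$$\left\vert\left\vert\left\vert f(A)Ug(A)-U\right\vert\right\vert\right\vert\leq\frac{2\sqrt{2}}{d_A^2}\left\vert\left\vert\left\vert\,|AU|+|UA|\,\right\vert\right\vert\right\vert,$$
and it then remains only to rewrite the right-hand side in terms of $AU+UA$.

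First I would compute the two absolute values using the positivity of $A$ (so $A^*=A$) and the unitarity of $U$ (so $U^*U=UU^*=I$). For one term, $|UA|=(A^*U^*UA)^{1/2}=(A^2)^{1/2}=A$. For the other, $|AU|=(U^*A^*AU)^{1/2}=(U^*A^2U)^{1/2}$; here I would invoke Lemma \ref{lem1} with the self-adjoint operator $A^2$ and the continuous function $t\mapsto t^{1/2}$ on $\sigma(A^2)\subset[0,\infty)$ (applied with $U^*$ playing the role of the unitary), which yields $(U^*A^2U)^{1/2}=U^*(A^2)^{1/2}U=U^*AU$.

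Consequently $|AU|+|UA|=U^*AU+A=U^*(AU+UA)$, and since a unitarily invariant norm is unchanged under left multiplication by the unitary $U^*$ (a consequence of \eqref{u1}), we obtain
$$\left\vert\left\vert\left\vert\,|AU|+|UA|\,\right\vert\right\vert\right\vert=\left\vert\left\vert\left\vert U^*(AU+UA)\right\vert\right\vert\right\vert=\left\vert\left\vert\left\vert AU+UA\right\vert\right\vert\right\vert.$$
Substituting this into the inequality above delivers the claimed bound.

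The argument is essentially routine once the reduction to Corollary \ref{C2} is carried out; the only step requiring genuine care is the evaluation of $|AU|$, where the identity $(U^*A^2U)^{1/2}=U^*AU$ must be \emph{justified} rather than merely asserted. This is precisely the purpose served by Lemma \ref{lem1}, which was recorded just beforehand, so I expect no serious obstacle beyond bookkeeping of the functional calculus at this point.
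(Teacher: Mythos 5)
Your proof is correct, but it is organized differently from the paper's. The paper first factors $f(A)Ug(A)-U=\bigl(f(A)\,(Ug(A)U^{*})-I\bigr)U$, drops the trailing $U$ by unitary invariance, invokes Lemma \ref{lem1} to rewrite $Ug(A)U^{*}=g(UAU^{*})$, and then applies inequality \eqref{55} of Theorem \ref{T1} with $X=I$ and $B=UAU^{*}$ (checking $d_{UAU^{*}}=d_{A}$); the right-hand side $\left\vert\left\vert\left\vert\,|A|+|UAU^{*}|\,\right\vert\right\vert\right\vert=\left\vert\left\vert\left\vert A+UAU^{*}\right\vert\right\vert\right\vert=\left\vert\left\vert\left\vert (AU+UA)U^{*}\right\vert\right\vert\right\vert$ then collapses to $\left\vert\left\vert\left\vert AU+UA\right\vert\right\vert\right\vert$. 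You instead apply Corollary \ref{C2} directly with $X=U$ and only afterwards simplify the moduli, using Lemma \ref{lem1} to justify $|AU|=(U^{*}A^{2}U)^{1/2}=U^{*}AU$ and the trivial computation $|UA|=A$. The two arguments use exactly the same ingredients --- inequality \eqref{55}, Lemma \ref{lem1}, and unitary invariance --- but deploy them in opposite order: the paper absorbs the unitary into the second operator slot before applying the main theorem, while you keep it in the $X$ slot and do the functional-calculus bookkeeping on the resulting singular-value expression. Your route spares you the (minor) verification that $UAU^{*}$ is a $G_{1}$ operator with $d_{UAU^{*}}=d_{A}$; the paper's route spares the evaluation of $|AU|$. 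Both are equally rigorous and of comparable length.
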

\begin{proof}
\begin{eqnarray*}
\left\vert \left\vert \left\vert f(A)Ug(A)-U\right\vert \right\vert\right\vert &=& \left\vert \left\vert \left\vert f(A)I(Ug(A)U^*)-I \right\vert \right\vert\right\vert\\
&=& \left\vert \left\vert \left\vert f(A)I g(UAU^*)-I\right\vert \right\vert\right\vert \qquad\qquad\qquad\quad (\mbox{by Lemma~} \ref{lem1})\\
&\leq& \frac{2\sqrt{2}}{d_{A}d_{UAU^*}} \left\vert \left\vert \left\vert\, |AI|+|IUAU^*|\, \right\vert \right\vert\right\vert\qquad (\mbox{by inequality~} \eqref{55})\\
&=& \frac{2\sqrt{2}}{d_{A}^2} \left\vert \left\vert \left\vert AU+UA \right\vert \right\vert\right\vert\,,
\end{eqnarray*}
since $d_{UAU^*}={\rm{dist}}(\partial\mathbb{D},\sigma (UAU^*))={\rm{dist}}(\partial\mathbb{D},\sigma (A))=d_A$.
\end{proof}

\section{Upper bounds for $|||f(A)X\pm Xg(B)|||$}

In the first result of this section, we find some upper bounds for $|||f(A)X\pm Xg(B)|||$.

\begin{theorem}\label{T1_N}
Let $A,B\in\mathbb{B}(\mathscr{H})$ be $G_{1}$ operators such that $\sigma (A)\cup \sigma (B)\subset\mathbb{D}$ and $X\in\mathbb{B}(\mathscr{H})$. Let $f, g \in \mathfrak{H}$ and $\left\vert \left\vert \left\vert\cdot \right\vert \right\vert \right\vert $ be a unitarily invariant norm. Then
$$\vert\vert\vert f(A)X+Xg(B)\vert\vert\vert \leq \frac{2\sqrt{2}}{d_Ad_B}\vert\vert\vert \;|AXB|+|X|\;\vert\vert\vert $$ and
$$\vert\vert\vert f(A)X-Xg(B)\vert\vert\vert \leq \frac{2\sqrt{2}}{d_Ad_B}\vert\vert\vert \;|AX|+|XB|\;\vert\vert\vert \,.$$
\end{theorem}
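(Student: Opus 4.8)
The plan is to run the argument of Theorem \ref{T1} almost verbatim, the only genuinely new ingredient being the algebraic identity that replaces the one displayed there. First I would invoke the Herglotz representation \eqref{6} to write $f(z)=\int_{0}^{2\pi}\frac{e^{i\alpha}+z}{e^{i\alpha}-z}\,d\mu(\alpha)$ and $g(z)=\int_{0}^{2\pi}\frac{e^{i\beta}+z}{e^{i\beta}-z}\,d\nu(\beta)$, where $\mu,\nu$ are positive Borel measures of total mass $1$. Since $\int_{0}^{2\pi}d\mu=\int_{0}^{2\pi}d\nu=1$, I can write \emph{both} $f(A)X$ and $Xg(B)$ as double integrals in $\alpha$ and $\beta$, so that $f(A)X\pm Xg(B)$ becomes a single double integral whose integrand is
$$\left(e^{i\alpha}-A\right)^{-1}\left(e^{i\alpha}+A\right)X\pm X\left(e^{i\beta}+B\right)\left(e^{i\beta}-B\right)^{-1}.$$

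The heart of the argument is to factor this integrand. Setting $P=\left(e^{i\alpha}-A\right)^{-1}$ and $Q=\left(e^{i\beta}-B\right)^{-1}$ and inserting $I=\left(e^{i\beta}-B\right)Q$ into the first summand and $I=P\left(e^{i\alpha}-A\right)$ into the second, I would rewrite the integrand as $P\bigl[(e^{i\alpha}+A)X(e^{i\beta}-B)\pm(e^{i\alpha}-A)X(e^{i\beta}+B)\bigr]Q$. Expanding the bracket, the cross terms cancel, leaving $2(e^{i(\alpha+\beta)}X-AXB)$ for the $+$ sign and $2(e^{i\beta}AX-e^{i\alpha}XB)$ for the $-$ sign. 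Thus the sum structure collapses to exactly the two shapes that carry the pairs $\{AXB,X\}$ and $\{AX,XB\}$ appearing on the two right-hand sides.

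From here the estimates run parallel to Theorem \ref{T1}. I would bound $\|P\|\le 1/d_A$ and $\|Q\|\le 1/d_B$ by the $G_{1}$ condition \eqref{3}, exactly as in \eqref{s2}--\eqref{s3}. For the inner norms I would recycle the block-matrix device of Theorem \ref{T1}: for the $+$ case,
$$\left(e^{i(\alpha+\beta)}X-AXB\right)\oplus 0=\begin{bmatrix}e^{i(\alpha+\beta)}&-1\\0&0\end{bmatrix}\begin{bmatrix}X&0\\AXB&0\end{bmatrix},$$
where the scalar factor has operator norm $\sqrt{2}$ and the modulus of the second factor is $(|X|^{2}+|AXB|^{2})^{1/2}\oplus 0$; applying \eqref{u1}, the Ando--Zhan inequality for $h(t)=t^{1/2}$, and the Ky Fan dominance theorem as in the passage leading to \eqref{s4} gives $\vert\vert\vert e^{i(\alpha+\beta)}X-AXB\vert\vert\vert\le\sqrt{2}\,\vert\vert\vert\,|AXB|+|X|\,\vert\vert\vert$. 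The $-$ case is identical with $\begin{bmatrix}e^{i\beta}&-e^{i\alpha}\\0&0\end{bmatrix}$ and the column of $AX,XB$, yielding $\vert\vert\vert e^{i\beta}AX-e^{i\alpha}XB\vert\vert\vert\le\sqrt{2}\,\vert\vert\vert\,|AX|+|XB|\,\vert\vert\vert$. Substituting these bounds into the double integral and using $\mu(\partial\mathbb{D})=\nu(\partial\mathbb{D})=1$ produces the constant $\frac{2\sqrt{2}}{d_Ad_B}$ and the two claimed inequalities.

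The only step needing real care—and the one I would double-check—is the cancellation in the second paragraph: one must confirm that rewriting $f(A)X$ and $Xg(B)$ as double integrals (legitimate precisely because each measure has unit total mass) combines them under one integrand, and that the expansion indeed kills the $XB$ and $AX$ cross terms for the $+$ sign and the $e^{i(\alpha+\beta)}X$ and $AXB$ terms for the $-$ sign. Everything past that point is a transcription of the machinery already assembled for Theorem \ref{T1}.
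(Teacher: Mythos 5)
Your proposal is correct and follows essentially the same route as the paper: the paper combines $f(A)X$ and $Xg(B)$ into one double integral using the unit total masses, telescopes the integrand to $2(e^{i\alpha}-A)^{-1}(e^{i\alpha}Xe^{i\beta}-AXB)(e^{i\beta}-B)^{-1}$ (and the analogous $AXe^{i\beta}-e^{i\alpha}XB$ form for the minus sign), and then invokes the same $G_1$ resolvent bounds, block-matrix factorization, Ando--Zhan inequality, and Ky Fan dominance as in Theorem \ref{T1}. Your algebraic identities and the $\sqrt{2}$ bookkeeping all check out, so no further changes are needed.
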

\begin{proof}
Noting that
$$\int_{0}^{2\pi}d\mu(\alpha)=\int_{0}^{2\pi}d\nu(\beta)=1,$$ we have

\begin{align*}
&f(A)X+Xg(B)\\
&=\int_{0}^{2\pi}(e^{i\alpha}-A)^{-1}(e^{i\alpha}+A)X\;d\mu(\alpha)+\int_{0}^{2\pi}X(e^{i\beta}+B)(e^{i\beta}-B)^{-1}\;d\nu(\beta)\\
&=\int_{0}^{2\pi}\int_{0}^{2\pi}\left\{(e^{i\alpha}-A)^{-1}(e^{i\alpha}+A)X(e^{i\beta}-B)(e^{i\beta}-B)^{-1}+\right.\\
&\quad \left.+(e^{i\alpha}-A)^{-1}(e^{i\alpha}-A)X(e^{i\beta}+B)(e^{i\beta}-B)^{-1}\right\}d\mu(\alpha)d\nu(\beta)\\
&=\int_{0}^{2\pi}\int_{0}^{2\pi}(e^{i\alpha}-A)^{-1}\left[(e^{i\alpha}+A)X(e^{i\beta}-B)\right.\\
&\quad\left. +(e^{i\alpha}-A)X(e^{i\beta}+B)\right](e^{i\beta}-B)^{-1}d\mu(\alpha)d\nu(\beta)\\
&=2\int_0^{2\pi}\int_{0}^{2\pi}(e^{i\alpha}-A)^{-1}(e^{i\alpha}Xe^{i\beta}-AXB)(e^{i\beta}-B)^{-1}d\mu(\alpha)d\nu(\beta).
\end{align*}
Applying the same reasoning as in the proof of Theorem \ref{T1}, we get the required inequality. The proof of the second inequality can be completed similarly.
\end{proof}
The next result reads as follows.
\begin{proposition}\label{T2_N}
Under the same assumptions of Theorem \ref{T1_N}, the inequalities
\begin{align*}
\vert\vert\vert f(A)X&+f(A)Xg(B)+Xg(B)\vert\vert\vert \\
&\leq \frac{\sqrt{2}}{d_Ad_B}\left(\vert\vert\vert \;|AXB|+|X|\;\vert\vert\vert +\vert\vert\vert \;|XB|+|X|\;\vert\vert\vert +\vert\vert\vert \;|AX|+|X|\;\vert\vert\vert
\right),
\end{align*}
and
\begin{align*}
\vert\vert\vert f(A)X&+f(A)Xg(B)+Xg(B)\vert\vert\vert \\
&\leq \frac{2}{d_Ad_B}\left(\vert\vert\vert \;|AXB|+|AX|+|XB|+3|X|\;\vert\vert\vert
\right)
\end{align*}
hold.
\end{proposition}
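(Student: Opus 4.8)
The plan is to run the Herglotz/resolvent machinery of Theorem \ref{T1_N} once more, reducing the three summands to a single double integral and then estimating the integrand by a unitarily invariant norm inequality. First I would use $\int_0^{2\pi}d\mu(\alpha)=\int_0^{2\pi}d\nu(\beta)=1$ to turn each term into a double integral whose integrand has the form $(e^{i\alpha}-A)^{-1}[\,\cdot\,](e^{i\beta}-B)^{-1}$: in $f(A)X$ insert $I=(e^{i\beta}-B)(e^{i\beta}-B)^{-1}$ on the right, in $Xg(B)$ insert $I=(e^{i\alpha}-A)^{-1}(e^{i\alpha}-A)$ on the left, while $f(A)Xg(B)$ already has this shape. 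Adding the three bracketed numerators $(e^{i\alpha}+A)X(e^{i\beta}-B)$, $(e^{i\alpha}+A)X(e^{i\beta}+B)$ and $(e^{i\alpha}-A)X(e^{i\beta}+B)$ and simplifying (routine, but one must track the signs), I expect the middle factor to collapse to
\[
M(\alpha,\beta)=3e^{i(\alpha+\beta)}X+e^{i\alpha}XB+e^{i\beta}AX-AXB .
\]
Then \eqref{u1} together with the $G_{1}$ estimates \eqref{s2}, \eqref{s3} and $\iint d\mu\,d\nu=1$ reduce both assertions to a uniform (in $\alpha,\beta$) bound on $\vert\vert\vert M(\alpha,\beta)\vert\vert\vert$.

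The common engine is the inequality already used inside \eqref{msm0}, stated in the form: for operators $T_1,\dots,T_m$ and scalars $z_1,\dots,z_m$,
\[
\Big\vert\Big\vert\Big\vert\sum_{k=1}^{m}z_kT_k\Big\vert\Big\vert\Big\vert\le\Big(\sum_{k=1}^{m}|z_k|^2\Big)^{1/2}\Big\vert\Big\vert\Big\vert\Big(\sum_{k=1}^{m}|T_k|^2\Big)^{1/2}\Big\vert\Big\vert\Big\vert .
\]
This is obtained exactly as in the proof of Theorem \ref{T1}, by writing $\big(\sum z_kT_k\big)\oplus 0$ as the product of the scalar block row $(z_1,\dots,z_m)$, whose norm is $(\sum|z_k|^2)^{1/2}$, with the block column $(T_1,\dots,T_m)^{t}$, whose modulus is $\big(\sum|T_k|^2\big)^{1/2}\oplus 0$, and then invoking \eqref{u1} and \eqref{u3}.

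For the first inequality I would split $M$ into the three pairs
\[
\big(e^{i(\alpha+\beta)}X-AXB\big),\quad e^{i\alpha}\big(XB+e^{i\beta}X\big),\quad e^{i\beta}\big(AX+e^{i\alpha}X\big),
\]
whose sum is $M$. Applying the displayed bound with $m=2$ (so the scalar factor is $\sqrt2$) and then the Ando--Zhan inequality for $h(t)=t^{1/2}$ to each pair yields $\vert\vert\vert M\vert\vert\vert\le\sqrt2\big(\vert\vert\vert\,|AXB|+|X|\,\vert\vert\vert+\vert\vert\vert\,|XB|+|X|\,\vert\vert\vert+\vert\vert\vert\,|AX|+|X|\,\vert\vert\vert\big)$; integrating against $d\mu\,d\nu$ and using \eqref{s2}, \eqref{s3} gives the first inequality.

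For the second inequality the idea is to keep the three copies of $X$ together: apply the displayed bound to the \emph{four} terms $AXB,\;XB,\;AX,\;3X$ with the four unit--modulus coefficients $-1,\,e^{i\alpha},\,e^{i\beta},\,e^{i(\alpha+\beta)}$, so that the scalar factor becomes $(1+1+1+1)^{1/2}=2$ and $\sum|T_k|^2=|AXB|^2+|XB|^2+|AX|^2+9|X|^2$. This gives $\vert\vert\vert M\vert\vert\vert\le 2\,\vert\vert\vert\big(|AXB|^2+|XB|^2+|AX|^2+9|X|^2\big)^{1/2}\vert\vert\vert$, and collapsing the square root (with $(9|X|^2)^{1/2}=3|X|$) produces $2\,\vert\vert\vert\,|AXB|+|XB|+|AX|+3|X|\,\vert\vert\vert$, which after integration is the second inequality. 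I expect the main obstacle to be precisely this last collapse: the Ando--Zhan inequality is quoted only for two positive operators, whereas here I need its extension $\vert\vert\vert\big(\sum_kP_k\big)^{1/2}\vert\vert\vert\le\vert\vert\vert\sum_kP_k^{1/2}\vert\vert\vert$ to four summands. This several--variable subadditivity of non-negative operator monotone functions does hold, but since $\big(\sum_kP_k\big)^{1/2}\le\sum_kP_k^{1/2}$ fails at the operator level it cannot be reached by a naive induction from the two--variable case and must be invoked as the multivariate form of the Ando--Zhan inequality.
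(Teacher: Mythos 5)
Your argument is essentially the paper's: the same Herglotz reduction of the sum to a double integral with integrand $(e^{i\alpha}-A)^{-1}\bigl[3e^{i\alpha}Xe^{i\beta}+e^{i\alpha}XB+AXe^{i\beta}-AXB\bigr](e^{i\beta}-B)^{-1}$, the same three-pair splitting for the first bound, and the same $1\times 4$ block factorization that the paper leaves implicit (``an argument similar to that used in the proof of \eqref{msm0}'') for the second. Your caveat about the four-term step is well placed and is exactly what the paper glosses over, but the required multivariate subadditivity $\vert\vert\vert(\sum_k P_k)^{1/2}\vert\vert\vert\le\vert\vert\vert\sum_k P_k^{1/2}\vert\vert\vert$ is a known extension of the Ando--Zhan/Bourin--Uchiyama inequality for non-negative concave functions (see \cite{BOU}), so your proof goes through.
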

\begin{proof}
We prove the first inequality. As before, we have
\begin{align}\label{needed_1_thm_2}
f(A)X+Xg(B)&=2\int_0^{2\pi}\int_{0}^{2\pi}(e^{i\alpha}-A)^{-1}(e^{i\alpha}Xe^{i\beta}-AXB)(e^{i\beta}-B)^{-1}d\mu(\alpha)d\nu(\beta)
\end{align}
and
\begin{align}\label{needed_2_thm_2}
f(A)Xg(B)&=\int_{0}^{2\pi}\int_{0}^{2\pi}(e^{i\alpha}-A)^{-1}(e^{i\alpha}+A)X(e^{i\beta}+B)(e^{i\beta}-B)^{-1}d\mu(\alpha)d\nu(\beta).
\end{align}
Adding \eqref{needed_1_thm_2} and \eqref{needed_2_thm_2}, we get
\begin{align}
\nonumber f(A)X&+f(A)Xg(B)+Xg(B)\\
\nonumber&=\int_{0}^{2\pi}\int_{0}^{2\pi}(e^{i\alpha}-A)^{-1}
\left[2(e^{i\alpha}Xe^{i\beta}-AXB)\right.\\
\nonumber& \quad \left.+(e^{i\alpha}+A)X(e^{i\beta}+B)\right](e^{i\beta}-B)^{-1}d\mu(\alpha)d\nu(\beta)\\
\nonumber&=\int_{0}^{2\pi}\int_{0}^{2\pi}(e^{i\alpha}-A)^{-1}
\left[(e^{i\alpha}Xe^{i\beta}-AXB)+(e^{i\alpha}Xe^{i\beta}+e^{i\alpha}XB)\right.\\
\nonumber&\quad \left.+(e^{i\alpha}Xe^{i\beta}+AXe^{i\beta})\right](e^{i\beta}-B)^{-1}d\mu(\alpha)d\nu(\beta).
\end{align}
Consequently,
\begin{align}\label{needed_3_thm_2}
&\nonumber \vert\vert\vert f(A)X+f(A)Xg(B)+Xg(B)\vert\vert\vert \\
&\leq\frac{1}{d_Ad_B}\left(\vert\vert\vert e^{i\alpha}Xe^{i\beta}-AXB\vert\vert\vert +\vert\vert\vert e^{i\alpha}Xe^{i\beta}+e^{i\alpha}XB\vert\vert\vert +\vert\vert\vert e^{i\alpha}Xe^{i\beta}+AXe^{i\beta}\vert\vert\vert \right).
\end{align}
As before, it can be shown that
\begin{align}\label{needed_4_thm_2}
\vert\vert\vert e^{i\alpha}Xe^{i\beta}-AXB\vert\vert\vert \leq \sqrt{2}\vert\vert\vert \;|AXB|+|X|\;\vert\vert\vert .
\end{align}
Moreover,
\begin{align}\label{needed_5_thm_2}
\nonumber \vert\vert\vert e^{i\alpha}Xe^{i\beta}+e^{i\alpha}XB\vert\vert\vert &=\vert\vert\vert Xe^{i\beta}+XB\vert\vert\vert \\
\nonumber&=\vert\vert\vert e^{0i}Xe^{i\beta}+IXB\vert\vert\vert\\
&\leq \sqrt{2}\vert\vert\vert \;|XB|+|X|\;\vert\vert\vert .
\end{align}
Similarly,
\begin{align}\label{needed_6_thm_2}
\vert\vert\vert e^{i\alpha}Xe^{i\beta}+AXe^{i\beta}\vert\vert\vert \leq\sqrt{2}\vert\vert\vert \;|AX|+|X|\;\vert\vert\vert .
\end{align}
Now considering \eqref{needed_4_thm_2}, \eqref{needed_5_thm_2} and \eqref{needed_6_thm_2} in \eqref{needed_3_thm_2}, we get the desired inequality. The proof of the second inequality can be completed by an argument similar to that used in the proof of \eqref{msm0}.
\end{proof}

We conclude this article by presenting some inequalities involving the Hilbert-Schmidt norm $\|\cdot\|_2.$
\begin{theorem}\label{hilb}
Let $A,B\in\mathbb{M}_n$ be Hermitian matrices satisfying $\sigma(A)\cup \sigma(B)\subset \mathbb{D}$ and let $f, g\in \mathfrak{H}$. Then
\begin{align*}
\|f(A)X\pm Xg(B)\|_2\leq \left\|\frac{X+|A|X}{d_A}+\frac{X+X|B|}{d_B}\right\|_2,
\end{align*}
and
\begin{align*}
\|f(A)Xg(B)\pm X\|_2\leq \left\|\frac{I+|A|}{d_A}X\frac{I+|B|}{d_B}+X\right\|_2.
\end{align*}
\end{theorem}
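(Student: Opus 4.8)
The plan is to exploit the special structure of the Hilbert--Schmidt norm, namely that $\|\cdot\|_2$ is just the $\ell^2$-norm of the matrix entries in any orthonormal basis, which lets us diagonalize $A$ and $B$ on their respective sides. Since $A$ and $B$ are Hermitian, write their spectral decompositions $A=U\Lambda U^*$ and $B=VMV^*$, where $U,V$ are unitary and $\Lambda=\mathrm{diag}(\lambda_1,\dots,\lambda_n)$, $M=\mathrm{diag}(\mu_1,\dots,\mu_n)$ collect the (real) eigenvalues, all lying in $(-1,1)$ because $\sigma(A)\cup\sigma(B)\subset\mathbb D$. By the analytic functional calculus $f(A)=Uf(\Lambda)U^*$ and $g(B)=Vg(M)V^*$, with $f(\Lambda)=\mathrm{diag}(f(\lambda_i))$ and $g(M)=\mathrm{diag}(g(\mu_j))$, and likewise $|A|=U|\Lambda|U^*$, $|B|=V|M|V^*$. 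Putting $Y=U^*XV=(y_{ij})$ and using unitary invariance of $\|\cdot\|_2$, each quantity in the theorem becomes an entrywise weighting of $Y$: for instance
\begin{align*}
\|f(A)X\pm Xg(B)\|_2^2=\|f(\Lambda)Y\pm Yg(M)\|_2^2=\sum_{i,j}\bigl|f(\lambda_i)\pm g(\mu_j)\bigr|^2\,|y_{ij}|^2,
\end{align*}
while the corresponding right-hand side equals $\sum_{i,j}\bigl(\tfrac{1+|\lambda_i|}{d_A}+\tfrac{1+|\mu_j|}{d_B}\bigr)^2|y_{ij}|^2$; the product case reduces analogously, pitting $|f(\lambda_i)g(\mu_j)\pm 1|$ against $\tfrac{(1+|\lambda_i|)(1+|\mu_j|)}{d_Ad_B}+1$.

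Second, I would derive the scalar estimate that drives everything. Applying the Herglotz representation \eqref{6} at a real point $\lambda\in(-1,1)$ of $\sigma(A)$ gives
\begin{align*}
|f(\lambda)|\le\int_0^{2\pi}\frac{|e^{i\alpha}+\lambda|}{|e^{i\alpha}-\lambda|}\,d\mu(\alpha)\le\int_0^{2\pi}\frac{1+|\lambda|}{d_A}\,d\mu(\alpha)=\frac{1+|\lambda|}{d_A},
\end{align*}
using $|e^{i\alpha}+\lambda|\le 1+|\lambda|$ and $|e^{i\alpha}-\lambda|\ge 1-|\lambda|\ge d_A$ (the last step because $d_A=\min_{\lambda'\in\sigma(A)}(1-|\lambda'|)\le 1-|\lambda|$), together with the fact that $\mu$ has total mass $1$. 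The identical argument with $\nu$ yields $|g(\mu)|\le\frac{1+|\mu|}{d_B}$ for every eigenvalue $\mu$ of $B$.

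Finally, I would combine the two ingredients entrywise. By the triangle inequality for complex numbers, $|f(\lambda_i)\pm g(\mu_j)|\le|f(\lambda_i)|+|g(\mu_j)|\le\frac{1+|\lambda_i|}{d_A}+\frac{1+|\mu_j|}{d_B}$, and $|f(\lambda_i)g(\mu_j)\pm1|\le|f(\lambda_i)|\,|g(\mu_j)|+1\le\frac{(1+|\lambda_i|)(1+|\mu_j|)}{d_Ad_B}+1$; the same bounds serve both the $+$ and $-$ cases, matching the fixed right-hand sides. Inserting these into the entrywise sums and summing over $i,j$ yields both asserted inequalities. The only point requiring care is conceptual rather than computational: the reduction to a diagonal weighting is \emph{exact} precisely because $\|\cdot\|_2$ is the $\ell^2$-norm of the entries, so no analogue of the factor $\sqrt2$ or of the Ando--Zhan step from Theorem \ref{T1} is needed here; one must simply verify that the right-hand operators factor correctly through $U$ and $V$, e.g. $\frac{I+|A|}{d_A}X\frac{I+|B|}{d_B}=U\bigl(\frac{I+|\Lambda|}{d_A}Y\frac{I+|M|}{d_B}\bigr)V^*$.
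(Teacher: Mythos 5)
Your proposal is correct and follows essentially the same route as the paper: spectral decomposition of the Hermitian matrices, reduction of the Hilbert--Schmidt norm to an entrywise weighted sum over $Y=U^*XV$, and the Herglotz representation combined with $|e^{i\alpha}+\lambda|\le 1+|\lambda|$ and $|e^{i\alpha}-\lambda|\ge d_A$ to bound the scalar weights. The only cosmetic difference is that you extract the bound $|f(\lambda)|\le(1+|\lambda|)/d_A$ as a standalone estimate before applying the triangle inequality, whereas the paper applies the triangle inequality directly inside the integral representation of $f(\lambda_j)\pm g(\mu_k)$.
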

\begin{proof}
We prove the first inequality. The second inequality follows similarly.
Let $A=UD(\nu_j)U^*$ and $B=VD(\mu_k)V^*$ be the spectral decomposition of $A$ and $B$ and let $Y=U^*XV:=[y_{jk}].$ Noting that $|e^{i\alpha}-\lambda_j|\geq d_A$ and $|e^{i\beta}-\mu_k|\geq d_B,$ we have
\begin{align*}
\|f(A)X\pm Xg(B)\|_2^2&=\sum_{j,k}|f(\lambda_j)\pm g(\mu_k)|^2|y_{jk}|^2\\
&=\sum_{j,k}\left|\int_{0}^{2\pi}\frac{e^{i\alpha}+\lambda_j}{e^{i\alpha}-\lambda_j}d\mu(\alpha)\pm
\int_{0}^{2\pi}\frac{e^{i\beta}+\mu_k}{e^{i\beta}-\mu_k}d\nu(\beta)\right|^2|y_{jk}|^2\\
&\leq \sum_{j,k}\left(\int_{0}^{2\pi}\frac{|e^{i\alpha}+\lambda_j|}{|e^{i\alpha}-\lambda_j|}d\mu(\alpha)+
\int_{0}^{2\pi}\frac{|e^{i\beta}+\mu_k|}{|e^{i\beta}-\mu_k|}d\nu(\beta)\right)^2|y_{jk}|^2\\
&\leq\sum_{j,k}\left(\frac{1+|\lambda_j|}{d_A}+\frac{1+|\mu_k|}{d_B}\right)^2|y_{jk}|^2\\
&=\left\|\frac{X+|A|X}{d_A}+\frac{X+X|B|}{d_B}\right\|_2^2,
\end{align*}
which completes the proof.
\end{proof}

\begin{remark}
The results in Theorem \ref{hilb} can be extended to infinite dimensional separable complex Hilbert spaces. This can be done by using a result in \cite{voi}, which insures that if $A,B$ are normal operators acting on an infinite dimensional separable Hilbert space, then $A$ and $B$ are Hilbert-Schmidt perturbations of diagonal operators. That is, given $\epsilon>0,$ there exist diagonal operators $\Gamma_1,\Gamma_2$ and unitary operators $U,V$ such that $\|A-U\Gamma_1U^*\|_2<\epsilon$ and $\|B-V\Gamma_2V^*\|_2<\epsilon$.
\end{remark}


\end{document}